\theoremstyle{plain}
\newtheorem{theorem}{Theorem}[section]
\newtheorem{lemma}[theorem]{Lemma}
\theoremstyle{definition}
\theoremstyle{remark}
\newtheorem{remark}[theorem]{Remark}
\numberwithin{equation}{section}
\begin{document}

\title[Nonexistence of entire solutions of semilinear elliptic systems]
{Nonexistence of nonnegative entire solutions of semilinear elliptic systems}

\author[A. Gladkov]{Alexander Gladkov}
\address{Alexander Gladkov \\ Department of Mechanics and Mathematics
\\ Belarusian State University \\  4  Nezavisimosti Avenue \\ 220030
Minsk, Belarus  and  Peoples' Friendship University of Russia (RUDN University) \\  6 Miklukho-Maklaya street \\  117198 Moscow,  Russian Federation}    \email{gladkoval@mail.ru }

\author[S. Sergeenko]{Sergey Sergeenko}
\address{Sergey Sergeenko \\ Department of Mathematics and Information Technologies, Vitebsk State University named after P M Masherov,
33 Moskovskii pr., Vitebsk, 210038, Belarus}
\email{sergeenko@vsu.by}

\subjclass[2010]{35J47, 35J91}
\keywords{Semilinear elliptic system; entire solutions; nonexistence}

\begin{abstract}
We consider the second order semilinear elliptic system
 $\Delta u= p\left(  x\right) v^\alpha,$ $\Delta v= q\left(x\right) u^\beta,$ where
  $x \in \mathbf{R}^N,$ $N \geq 3,$ $\alpha$ and $\beta$ are positive constants,
 $p$  and  $q$ are nonnegative continuous functions. We prove that nontrivial nonnegative  entire solutions
 fail to exist if the functions  $p$  and  $q$ are of slow decay.
  \end{abstract}

\maketitle


\section{Introduction}

We consider the second order semilinear elliptic system of the following form
\begin{equation}\label{eq:main:task}
  \left\{\begin{aligned}
      \Delta u &= p\left( x\right) v^\alpha, \\
      \Delta v&= q\left( x\right) u^\beta,
    \end{aligned}\right.
\end{equation}
where $x\in \mathbf{R}^N,$ $N \geq 3,$ $\alpha,$ $\beta$ are positive numbers,
$p$ and $q$ are nonnegative continuous functions defined on $ \mathbf{R}^N.$
Our objective is to establish conditions for the nonexistence of nontrivial nonnegative entire solutions of
(\ref{eq:main:task}).  An entire solution of (\ref{eq:main:task})  is defined to be
a vector-valued function $\left(u, v\right) \in C^2 \left(\mathbf{R}^N\right) \times C^2
\left(\mathbf{R}^N\right)$ which satisfies  (\ref{eq:main:task}) at every point in $R^N.$

To formulate main result of this paper we introduce the functions $\tilde p (r)$ and $\tilde q (r)$ by
  \begin{equation*}\label{1}
  \tilde p (r) = \begin{cases}
 \left( \frac{1}{\omega_N r^{N-1}} \int_{|x| =r} p^{1/(1-\alpha)} (x) \, dS \right)^{1-\alpha}, \; \alpha >1, \\
  \min_{|x| = r} p(x),\; \alpha =1,
    \end{cases}\end{equation*}
\begin{equation*}\label{2}
  \tilde q (r) = \begin{cases}
 \left( \frac{1}{\omega_N r^{N-1}} \int_{|x| =r} q^{1/(1-\beta)} (x) \, dS \right)^{1 - \beta}, \; \beta >1, \\
  \min_{|x| = r} q(x),\; \beta =1,
    \end{cases}\end{equation*}
where $r>0,\,$ $\omega_N$ is the surface area of the unit sphere in $\mathbf{R}^N.$
We set $\tilde p (r) = 0$ and $\tilde q (r) = 0$ if
\begin{equation*}
  \int_{|x| =r}  p^{1/(1-\alpha)} (x) \, dS  = \infty  \,\,\, \textrm{and} \,\,\,  \int_{|x| =r} q^{1/(1-\beta)} (x) \, dS  = \infty,
\end{equation*}
respectively. We note that $ p (x) = \tilde p (|x|), \, $  $ q (x) = \tilde q (|x|)$
when $ p $ and $ q $ are spherically symmetric functions (i.e.,  $p (x)= p (|x|)$  and $q (x)= q (|x|)$).
Suppose that $ \tilde p $ and $ \tilde q $ satisfy
\begin{equation}\label{3}
\tilde p(r) \geq \frac{L_1}{r^\lambda \ln^\nu r} ,\quad
\tilde q(r) \geq \frac{L_2}{r^\mu \ln^\xi r}, \quad r \geq r_0 > 1,
\end{equation}
where $L_1 >0, $ $L_2 >0$ and  $\lambda,$ $\nu,$ $\mu,$ $\xi$ are constants. Our main result is as follows.
\begin{theorem}\label{Th1}
Let $N \geq 3,$ $\alpha \geq 1,$ $\beta \geq 1,$ $\alpha \beta > 1$ and $p$ and $q$ satisfy (\ref{3}). If at least one from the following conditions holds
     {\rm
  \begin{equation*}
 \begin{split}
 & \textrm{(i)} \,\,\,\,\,    2 - \mu+  \beta (2 - \lambda)  > 0; \\
 & \textrm{(ii)} \,\,\,\,     2 - \lambda + \alpha (2 - \mu) > 0; \\
 & \textrm{(iii)} \,\,\,      2 - \mu+  \beta (2 - \lambda) = 0, \; \lambda < 2, \; 1 - \xi - \beta \nu > 0;  \\
 & \textrm{(iv)} \,\,\,       2 - \lambda + \alpha (2 - \mu) = 0, \; \mu < 2, \; 1 - \nu - \alpha \xi > 0;   \\
 & \textrm{(v)} \,\,\,\,      \lambda = 2, \; \mu = 2, \; 1 - \xi + \beta (1-\nu) > 0; \\
& \textrm{(vi)} \,\,\,        \lambda = 2, \; \mu = 2, \;  1-\nu+\alpha(1-\xi)>0,
 \end{split}
\end{equation*}
}
then there are not nontrivial nonnegative entire solutions of (\ref{eq:main:task}).
\end{theorem}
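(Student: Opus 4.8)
The plan is to pass to the spherical means of $u$ and $v$, which satisfy a coupled pair of second order ordinary differential inequalities, and then to show that under any of (i)--(vi) these means are forced to grow so fast that they cannot be finite on all of $\mathbf{R}^N$.

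\emph{Radial reduction.} Suppose $(u,v)$ is a nontrivial nonnegative entire solution and put $f(r)=(\omega_N r^{N-1})^{-1}\int_{|x|=r}u\,dS$, $g(r)=(\omega_N r^{N-1})^{-1}\int_{|x|=r}v\,dS$. Since $\Delta u=pv^\alpha\ge 0$ and $\Delta v=qu^\beta\ge 0$, the functions $u,v$ are subharmonic, so $f,g$ are nondecreasing, and the divergence theorem gives $(r^{N-1}f')'=\omega_N^{-1}\int_{|x|=r}pv^\alpha\,dS$, $(r^{N-1}g')'=\omega_N^{-1}\int_{|x|=r}qu^\beta\,dS$. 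For $\alpha=1$ we estimate $\int_{|x|=r}pv\,dS\ge(\min_{|x|=r}p)\int_{|x|=r}v\,dS$, and for $\alpha>1$ we apply Hölder's inequality with exponents $\alpha$, $\alpha/(\alpha-1)$ to $\int_{|x|=r}v\,dS=\int_{|x|=r}(p^{1/\alpha}v)p^{-1/\alpha}\,dS$; this is exactly what makes $\tilde p,\tilde q$ appear, and (\ref{3}) guarantees $\tilde p,\tilde q>0$ (hence the relevant sphere integrals finite) for $r\ge r_0$. We obtain, for $r\ge r_0$,
\[
(r^{N-1}f'(r))'\ \ge\ r^{N-1}\tilde p(r)\,g(r)^\alpha,\qquad (r^{N-1}g'(r))'\ \ge\ r^{N-1}\tilde q(r)\,f(r)^\beta .
\]
Moreover (\ref{3}) forces $p\not\equiv0$, $q\not\equiv0$; together with the Liouville theorem for nonnegative harmonic functions on $\mathbf{R}^N$ ($N\ge3$) this rules out $u\equiv0$ and $v\equiv0$, so after enlarging $r_0$ we may assume $f(r_0)>0$ and $g(r_0)>0$.

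\emph{Bootstrap (the role of (i)--(vi)).} Integrating the two inequalities twice over $[r_0,r]$, dropping nonnegative boundary terms and using (\ref{3}),
\[
f(r)\ \ge\ L_1\int_{r_0}^{r}t^{1-N}\!\int_{r_0}^{t}s^{N-1-\lambda}(\ln s)^{-\nu}g(s)^\alpha\,ds\,dt ,
\]
and symmetrically for $g$ with $(L_2,\mu,\xi,f^\beta)$ in place of $(L_1,\lambda,\nu,g^\alpha)$. Starting from $g(s)\ge g(r_0)>0$ and then alternately inserting each new lower bound into the inequality for the other component (the lower limit kept fixed at $r_0$), one reaches after finitely many rounds estimates $f(r)\ge c\,r^{a}(\ln r)^{-\sigma}$, $g(r)\ge c\,r^{b}(\ln r)^{-\tau}$ on a fixed half-line. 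Each round replaces the exponent $a$ (resp.\ $b$) by the value of an affine map $x\mapsto\alpha\beta\,x+\mathrm{const}$; since $\alpha\beta>1$ this map is expanding, so once the starting exponent exceeds its fixed point the exponents grow without bound and can be raised above any prescribed level in finitely many steps. Conditions (i) and (ii) are designed so that such a starting exponent is available and beats the fixed point: when $\lambda<2$ one starts with $f\gtrsim r^{2-\lambda}$, and $2-\lambda$ exceeds the fixed point of the $f$-map exactly when (i) holds; when $\lambda\ge2$ one uses $\alpha\beta>1$ to see that (ii) is then automatic and starts from the $g$-side instead. In the borderline cases (iii)--(vi) the power of $r$ is frozen (at $2-\lambda$, or at $0$ when $\lambda=\mu=2$) and the same mechanism is run on the exponent of $\ln r$; the strict inequalities $1-\xi-\beta\nu>0$, $1-\nu-\alpha\xi>0$, $1-\xi+\beta(1-\nu)>0$, $1-\nu+\alpha(1-\xi)>0$ are precisely the conditions that the starting logarithmic exponent exceeds the fixed point of the logarithmic iteration.

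\emph{Finite-radius blow-up.} Fix $r_1$ large with $f(r_1)$ above the threshold produced above; on $[r_1,2r_1]$ we have $\tilde p,\tilde q\ge\delta_\ast>0$. Using monotonicity of $f$ in the integrated $g$-inequality gives $g(r)\ge c_1 f(r_1)^\beta r_1^{-\mu}(\ln r_1)^{-\xi}(r-r_1)^2$, and substituting into the $f$-inequality and integrating twice,
\[
f(r)\ \ge\ f(r_1)\Bigl[\,1+c_2\,r_1^{-\lambda-\alpha\mu}(\ln r_1)^{-\nu-\alpha\xi}\,f(r_1)^{\alpha\beta-1}(r-r_1)^{2\alpha+2}\,\Bigr],\qquad r_1\le r\le 2r_1 .
\]
Since $\alpha\beta>1$, iterating this with a base point moving through $r_1$ and geometrically decreasing increments (chosen so that $f$ doubles at each stage) yields a finite $r_\infty\le 2r_1$ with $f(r_\infty)=+\infty$, provided $f(r_1)\gtrsim r_1^{(\lambda+\alpha\mu-2\alpha-2)/(\alpha\beta-1)}(\ln r_1)^{(\nu+\alpha\xi)/(\alpha\beta-1)}$. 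A short computation shows that $(\lambda+\alpha\mu-2\alpha-2)/(\alpha\beta-1)$ equals $2-\lambda$ in the borderline cases (and $0$ when $\lambda=\mu=2$) and is a fixed number otherwise, so the growth rate from the bootstrap meets this requirement once enough rounds have been performed. Then $f(r_\infty)=+\infty$ contradicts $u\in C^2(\mathbf{R}^N)$, proving the theorem.

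\emph{Main obstacle.} The radial reduction and the blow-up step are essentially routine; the substance of the argument is the bootstrap, carried out uniformly over the six cases. This requires tracking logarithmic corrections through the iteration, handling the different asymptotics of $\int^{t}s^{N-1-\lambda}(\ln s)^{-\nu}\,ds$ according to the position of $\lambda$ relative to $2$ (and to $N$), exploiting $\alpha\beta>1$ both to make the iteration expanding and to force the companion condition in each pair, and finally matching the resulting exponent to the blow-up threshold.
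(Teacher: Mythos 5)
Your proposal is correct in outline and its two hard steps (the bootstrap and the blow-up threshold) check out, but it closes the argument by a genuinely different mechanism than the paper. The radial reduction (spherical means plus H\"older with exponents $\alpha$, $\alpha/(\alpha-1)$, which is exactly how $\tilde p,\tilde q$ arise) and the alternating bootstrap with the expanding affine map $x\mapsto\alpha\beta x+\mathrm{const}$ acting on the power of $r$, or on the power of $\ln r$ in the borderline cases, coincide with the paper's inductive lower bounds on the iterates $u_k,v_k$ (whose logarithmic exponents contain the factor $\alpha^k\beta^k(1-\xi-\beta\nu)$, etc.). Where you diverge is the contradiction: the paper's Lemma~\ref{l:I1} turns the coupled inequalities into a single differential inequality $(z')^{(m+1)\beta+n+1}\geq c\,QP^{\beta}z^{(\alpha+m+1)\beta+n}$ by repeatedly multiplying by $z'$ and integrating (this is where monotonicity of $s^{1-m}\tilde p$, $s^{1-n}\tilde q$ and the kernel estimate $1-(s/r)^{N-2}\gtrsim((r-s)/s)^m$ are needed), yielding a uniform Osserman-type upper bound over annuli $[aR,bR]$ that the growing lower bound violates as $R\to\infty$ (Theorem~\ref{Th2}). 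You instead run a doubling iteration on $[r_1,2r_1]$ to force $\overline u$ to become unbounded at a finite radius once $f(r_1)$ exceeds $\mathrm{const}\cdot r_1^{(\lambda+\alpha\mu-2\alpha-2)/(\alpha\beta-1)}(\ln r_1)^{(\nu+\alpha\xi)/(\alpha\beta-1)}$; I verified that this threshold reduces to $r^{2-\lambda}(\ln r)^{(\nu+\alpha\xi)/(\alpha\beta-1)}$ under (iii) and is met by the bootstrap in every case. Your route is more elementary and self-contained, needs no monotonicity hypothesis on $s^{1-m}\tilde p$, $s^{1-n}\tilde q$, and handles (i)--(ii) directly rather than by citation to Yarur and Teramoto as the paper does; the paper's route isolates a reusable quantitative statement (Theorem~\ref{Th2}) of independent interest. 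Two small cautions: the threshold exponent in case (iv) is $0$ (matching the frozen $r$-exponent of $f$ there), not $2-\lambda$ as your parenthetical suggests; and the saturation of the power iteration at $\max(0,\cdot)$ when intermediate exponents go negative needs the observation that (i) together with $\lambda\geq 2$ forces (ii), which you correctly flag.
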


The problem of the existence and nonexistence of entire solutions of scalar elliptic equations
has been investigated by many authors (see e.g. \cite{NU, GS1, Yang, GS2, YZ, F, FS, DGGW, K1, KK1, KK2} and the references therein).
 Entire solutions for semilinear elliptic systems have been considered in many papers also (see, for example, previous works
 \cite{KK, Y, T1, LW, CR, TU, T3, ZSX, L, Z, C}). In particular, Theorem~\ref{Th1} was proved in \cite{Y, T1} for $\nu = \xi = 0.$
Lair in \cite{L} established the existence of positive entire  solutions to the elliptic system (\ref{eq:main:task}) with spherically symmetric coefficients
$ p  $ and  $ q.$ More precisely, the system (\ref{eq:main:task}) has positive entire solutions if $\alpha \beta > 1$ and
$ p (|x|)$ and $q (|x|)$ satisfy at least one of the following conditions
\begin{align}
  \label{Lair-2011:exist-Q}
  \int_0^\infty t p(t) \left( t^{2-N} \int_0^t s^{N-3} \int_0^s \tau
    q(\tau) d\tau ds \right)^\alpha dt &<\infty,
  \\
  \label{Lair-2011:exist-P}
  \int_0^\infty t q(t) \left( t^{2-N} \int_0^t s^{N-3} \int_0^s \tau
    p(\tau) d\tau ds \right)^\beta dt &<\infty.
\end{align}

This paper is organized as follows. Some auxiliary propositions are proved in Section 2. Section 3 is devoted to the proof of Theorem~\ref{Th1}.
In Section 4, we discuss the optimality of obtained results.


\section{Auxiliary propositions}\label{class}

Let $P$ and $Q$ be nonnegative nonincreasing continuous functions  and $h$ and $g$ be nonnegative continuous functions.
We introduce the functions $y$ and $z$ in the following way
\begin{equation*}
    y(r) = \int_R^r (r-s)^m P(s) g^\alpha(s) \, ds,\quad
    z(r) = \int_R^r (r-s)^n Q(s) h^\beta(s) \, ds,
  \end{equation*}
where $r > R >0, \,$ $m,n \in \mathbf{N}.$

\begin{lemma}\label{l:I1}
Let $m,n \in \mathbf{N},$ $b > 1,$ $\alpha > 0,$ $\beta > 0,$  $\alpha \beta > 1$ and for $r \in [R, bR]$
\begin{equation*}
 \begin{split}
  h(r) &\geq \int_R^r (r-s)^m P(s) g^\alpha(s) \, ds,
    \\
    g(r) &\geq \int_R^r (r-s)^n Q(s) h^\beta(s) \, ds.
 \end{split}
\end{equation*}
 Then
  \begin{equation}  \label{ineq:help:eysupl}
    y^{\frac{\alpha \beta - 1}{(n+1)\alpha+m+1}}(A)
    \int_A^{bR} \left( P(r) Q^\alpha(r) \right)^{\frac{1}{(n+1)\alpha + m +1}}  \, dr \leq C_y,
    \end{equation}
  \begin{equation}  \label{ineq:help:ezsupl}
    z^{\frac{\alpha \beta -1}{(m+1)\beta+n+1}}(A)
    \int_A^{bR} \left( Q(r) P^\beta(r) \right)^{\frac{1}{(m+1)\beta +n +1}} \, dr \leq C_z,
    \end{equation}
 where $A \in (R,bR),$ $C_y$ and $C_z$ are positive constants which do not depend on $R.$
\end{lemma}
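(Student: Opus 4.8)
The plan is to establish \eqref{ineq:help:eysupl}; inequality \eqref{ineq:help:ezsupl} then follows at once from the symmetry of the hypotheses under $m\leftrightarrow n$, $\alpha\leftrightarrow\beta$, $P\leftrightarrow Q$, $g\leftrightarrow h$, $y\leftrightarrow z$. First I would record that, by assumption, $h(r)\ge y(r)$ and $g(r)\ge z(r)$ for $r\in[R,bR]$, and that $y$ and $z$ are nondecreasing (their integrands are nonnegative and, for fixed $s$, $(r-s)^m$ and $(r-s)^n$ increase with $r$, while the interval of integration grows). Since the exponent $(\alpha\beta-1)/\gamma$ with $\gamma:=(n+1)\alpha+m+1$ is positive, if $y(A)=0$ the inequality is trivial, so I assume $y(A)>0$, whence $y(a)\ge y(A)>0$ for every $a\in[A,bR]$.

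The core estimate I would prove is: for all $A\le a\le r\le bR$,
\[
y(r)\ \ge\ c\,y^{\alpha\beta}(a)\left(\int_a^r\big(P(s)Q^\alpha(s)\big)^{1/\gamma}\,ds\right)^{\gamma},
\]
with a constant $c>0$ depending only on $m,n,\alpha$. To get it, restrict all integrations to $[a,\cdot]$ and use $h\ge y$, $g\ge z$ to write $z(s)\ge\int_a^s(s-t)^nQ(t)y^\beta(t)\,dt$ and $y(r)\ge\int_a^r(r-s)^mP(s)z^\alpha(s)\,ds$. Because $y$ is nondecreasing and $Q$ is nonincreasing,
\[
z(s)\ \ge\ y^\beta(a)\int_a^s(s-t)^nQ(t)\,dt\ \ge\ \frac{y^\beta(a)}{n+1}\,(s-a)^{n+1}Q(s),
\]
and substituting this into the lower bound for $y$ gives
\[
y(r)\ \ge\ \frac{y^{\alpha\beta}(a)}{(n+1)^\alpha}\int_a^r(r-s)^m(s-a)^{(n+1)\alpha}P(s)Q^\alpha(s)\,ds .
\]
Finally I would apply H\"older's inequality with exponents $\gamma$ and $\gamma/(\gamma-1)$ to $\int_a^r(PQ^\alpha)^{1/\gamma}$, splitting the integrand as $\big((r-s)^m(s-a)^{(n+1)\alpha}PQ^\alpha\big)^{1/\gamma}\cdot\big((r-s)^m(s-a)^{(n+1)\alpha}\big)^{-1/\gamma}$; the weight contributes the factor $\big(\int_a^r\big((r-s)^m(s-a)^{(n+1)\alpha}\big)^{-1/(\gamma-1)}\,ds\big)^{(\gamma-1)/\gamma}$, and the identity $m+(n+1)\alpha=\gamma-1$ makes this integral a finite Beta-function constant independent of $a$, $r$ (and of $R$). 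Combining gives the core estimate.

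To conclude, set $T(r):=\int_A^r\big(P(s)Q^\alpha(s)\big)^{1/\gamma}\,ds$, so $T$ is nondecreasing, $T(A)=0$ and $T(bR)<\infty$, and the core estimate rearranges to
\[
T(r)-T(a)\ \le\ c^{-1/\gamma}\,y(r)^{1/\gamma}\,y(a)^{-\alpha\beta/\gamma},\qquad A\le a\le r\le bR .
\]
Now I would run an adaptive dyadic partition of the range of $y$: since $y$ is continuous, nondecreasing, $y(A)>0$ and $y(bR)<\infty$, choose $a_0=A<a_1<\dots<a_N$ with $y(a_i)=2^{\,i}y(A)$ as long as $2^{\,i}y(A)\le y(bR)$, and put $a_{N+1}=bR$, so that $y(a_{i+1})\le 2y(a_i)$ for every $i\le N$. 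Telescoping and using $y(a_i)=2^iy(A)$,
\[
T(bR)=\sum_{i=0}^{N}\big(T(a_{i+1})-T(a_i)\big)\ \le\ c^{-1/\gamma}2^{1/\gamma}\,y(A)^{-(\alpha\beta-1)/\gamma}\sum_{i=0}^{\infty}2^{-i(\alpha\beta-1)/\gamma}.
\]
Since $\alpha\beta>1$ the geometric series converges, and multiplying through by $y^{(\alpha\beta-1)/\gamma}(A)$ yields \eqref{ineq:help:eysupl} with $C_y=c^{-1/\gamma}2^{1/\gamma}\big(1-2^{-(\alpha\beta-1)/\gamma}\big)^{-1}$, which depends only on $m,n,\alpha,\beta$ and not on $R$.

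The two places needing care are, first, producing the power $(PQ^\alpha)^{1/\gamma}$ with a constant that does not depend on the interval $[a,r]$ — this is exactly where the arithmetic relation $m+(n+1)\alpha=\gamma-1$, equivalently the scale invariance of the weighted Beta integral, is used — and, second, converting the recursive bound $y(r)\ge c\,y^{\alpha\beta}(a)(\cdots)^\gamma$, in which the unknown $y$ appears on both sides, into the one-sided estimate \eqref{ineq:help:eysupl}; this is the role of the dyadic partition, and the summability of the resulting series is precisely the point at which the superlinearity hypothesis $\alpha\beta>1$ is indispensable.
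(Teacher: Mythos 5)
Your proof is correct, but it takes a genuinely different route from the paper's. The paper works at the level of differential inequalities: it differentiates to get $y^{(m+1)}\geq m!\,P z^{\alpha}$ and $z^{(n+1)}\geq n!\,Q y^{\beta}$, then repeatedly multiplies by $z'$ and integrates by parts (this is where the monotonicity of $P$ is consumed, to pull $P(r)$ out of the integrals) until it reaches the first-order separable inequality $(z'(r))^{(m+1)\beta+n+1}\geq c\,Q(r)P^{\beta}(r)\,(z(r))^{(\alpha+m+1)\beta+n}$; separating variables and integrating over $[A,bR]$ then gives the bound directly, with $\alpha\beta>1$ entering as the positivity of the exponent $\frac{\alpha\beta-1}{(m+1)\beta+n+1}$ in the telescoped antiderivative $z^{-\frac{\alpha\beta-1}{(m+1)\beta+n+1}}$. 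You instead stay entirely at the integral level: one substitution of the $z$-bound into the $y$-bound (using the monotonicity of $Q$ and of $y$ to produce the weight $(s-a)^{(n+1)\alpha}$), a reverse H\"older step whose weight integral is scale-invariant precisely because $m+(n+1)\alpha=\gamma-1$, and a dyadic decomposition of the range of $y$ to resolve the implicit inequality, with $\alpha\beta>1$ entering as summability of a geometric series. Both arguments use the standing hypothesis from the section preamble that $P$ and $Q$ are nonincreasing, and both produce constants independent of $R$ via scale invariance. The paper's version is shorter once the iterated integration by parts is accepted; yours avoids differentiating $y$ and $z$ altogether, makes the role of each hypothesis ($Q$ nonincreasing, $\alpha\beta>1$, the exponent identity) more transparent, and would extend to kernels that are not powers of $(r-s)$. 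The only point you should make explicit is the continuity of $y$ (clear, since $P$ and $g$ are continuous), which you need in order to select the exact dyadic levels $y(a_i)=2^{i}y(A)$.
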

\begin{proof}
We prove only (\ref{ineq:help:ezsupl}), the proof of (\ref{ineq:help:eysupl}) is similar. Note that
\begin{equation}\label{ineq:help:ugeqy-vgeqz}
  h(r) \geq y(r) \geq 0,\quad
  g(r) \geq z(r) \geq 0,
\end{equation}
\begin{equation}\label{24}
y^{(i)}(r)\geq 0, \, y^{(i)}(R) = 0  \,\,\, \textrm{for} \,\,\,  i \in \{1,2,\dots,m\},
\end{equation}
\begin{equation}\label{25}
z^{(i)}(r)\geq 0, \, z^{(i)}(R) = 0  \,\,\, \textrm{for} \,\,\,  i \in \{1,2,\dots,n\},
\end{equation}
\begin{equation}\label{der}
  y^{(m+1)}(r) = m! P(r) g^\alpha(r) \geq 0, \quad   z^{(n+1)}(r) = n! Q(r) h^\beta(r)\geq 0.
\end{equation}
  Thus, from (\ref{ineq:help:ugeqy-vgeqz}), (\ref{der}), we have
\begin{align}
  y^{(m+1)}(r) &\geq m! P(r)
  z^\alpha(r), \label{ineq:help:ydiff}
  \\
  z^{(n+1)}(r) &\geq n! Q(r)
  y^\beta(r). \label{ineq:help:zdiff}
\end{align}
Now we multiply (\ref{ineq:help:ydiff}) by $z'(r)$ and integrate over $[R,r].$
Using the integration by parts, (\ref{24}), (\ref{25}) and the monotonicity of $P,$  we obtain
\begin{equation*}
  y^{(m)}(r) z'(r) \geq \frac{m!}{\alpha+1} P(r) (z(r))^{\alpha+1}.
\end{equation*}
Repeating this process appropriately many times, we get
\begin{equation}\label{ineq:help:ydiffzdiff}
  y(r) (z'(r))^{m+1} \geq \frac{m!}{(\alpha+1) (\alpha+2)\dots (\alpha+m+1)} P(r) (z(r))^{\alpha+m+1}.
\end{equation}
 A combination of (\ref{ineq:help:zdiff}) and (\ref{ineq:help:ydiffzdiff}) gives
\begin{equation}\label{ineq:help:zdiffdeg}
  z^{(n+1)}(r) (z'(r))^{(m+1)\beta} \geq c_1 Q(r) P^\beta(r)
  (z(r))^{(\alpha + m + 1)\beta},
\end{equation}
where a positive constant $c_1$ does not depend on $R.$ From now on, without causing any confusion, we may
use $c_i,\,$ $C_i,$ $\bar C_i,$ $\hat C_i,$ $\breve C_i$ or $\tilde C_i \,$ $(i =0, 1, 2, ...)$ to denote various positive constants.
Applying $n$ times the operation of multiplying (\ref{ineq:help:zdiffdeg})
by $z'(r)$ and integration over $ [R, r], $ we obtain
\begin{equation}\label{ineq:dzpower}
  (z'(r))^{(m+1)\beta+n+1} \geq c_2  Q(r) P^\beta(r)  (z(r))^{(\alpha+m+1)\beta+n}.
\end{equation}

Without loss of generality, we can suppose that $z(A) >0.$
From (\ref{ineq:dzpower}), we find
\begin{equation*}
  (z(r))^{- \frac{(\alpha + m + 1) \beta + n}{(m + 1) \beta + n +
      1}} z'(r) \geq \left(c_2  Q(r) P^\beta(r)  \right)^
  {\frac{1}{(m + 1) \beta + n + 1}}, \quad r \in [A, bR].
\end{equation*}
Integrating this relation from $A$ to $bR,$ then leads to the inequality
\begin{gather*}
  C_z \left(z^{-\frac{\alpha\beta-1}{(m+1)\beta+n+1}}(A)-
    z^{-\frac{\alpha\beta-1}{(m+1)\beta+n+1}}(bR)\right) \geq
  \int_A^{bR} \left(  Q(r) P^\beta(r)
  \right)^{\frac{1}{(m+1)\beta+n+1}} \, dr,
\end{gather*}
which completes the proof.
\end{proof}

Let us introduce the functions $u_k(r),$ $v_k(r)$  by the following recurrence relationships
\begin{align}
  u_0(r) &= 1,   \quad    v_0(r) = 1, \label{eq:main3:defu0v0}
  \\
  u_k(r) &= \frac{1}{N-2} \int_\rho^r s \tilde p(s) \left[ 1 -
    \left(\frac{s}{r}\right)^{N-2} \right] v_{k-1}^\alpha(s) \, ds,  \quad k \in \mathbb{N}, \label{eq:main3:defuk}
  \\
  v_k(r) &= \frac{1}{N-2} \int_\rho^r s \tilde q(s) \left[ 1 -
    \left(\frac{s}{r}\right)^{N-2} \right] u_{k-1}^\beta(s) \,
  ds,  \quad k \in \mathbb{N} \label{eq:main3:defvk}
\end{align}
and define the functions
\begin{equation*}\label{yz}
  y_k(R) = \int_R^{aR} (aR-s)^m s^{1-m} \tilde p(s) v_k^\alpha(s) \, ds,\quad
  z_k(R) = \int_R^{aR} (aR-s)^n s^{1-n} \tilde q(s) u_k^\beta(s) \, ds,
\end{equation*}
where $\rho > 0, \, R > 1, \, a > 1.$

Now we prove an auxiliary statement which has independent interest.
\begin{theorem}\label{Th2}
Let $N \geq 3,$ $\alpha \geq 1,$ $\beta \geq 1,$ $\alpha \beta > 1,$  $r^{1-m} \tilde p (r)$ and $r^{1-n} \tilde q (r)$
be nonincreasing functions on $(\rho, \infty)$ for some $m,n \in \mathbf{N}$ and $\rho > 0.$
If at least one from the following conditions holds
\begin{equation}\label{eq:main3:oscy}
   \limsup_{R \to \infty} y_k^{\frac{\alpha\beta -
      1}{(n+1)\alpha+m+1}}(R)
  \int_{aR}^{bR} \left( s^{1-m} \tilde p (s) (s^{1-n} \tilde q(s))^\alpha
  \right)^{\frac{1}{(n+1)\alpha+m+1}} \, ds=\infty
\end{equation}
or
\begin{equation}  \label{eq:main3:oscz}
  \limsup_{R \to \infty}
  z_k^{\frac{\alpha\beta-1}{(m+1)\beta+n+1}}(R)
  \int_{aR}^{bR} \left( s^{1-n} \tilde q(s) (s^{1-m} \tilde p (s))^\beta
  \right)^{\frac{1}{(m+1)\beta+n+1}} \, ds=\infty
\end{equation}
for some  $k \in \mathbf{N}$ and $a, \, b$ such that $1 < a < b \leq 2,$
then there are not nontrivial nonnegative entire solutions of (\ref{eq:main:task}).
\end{theorem}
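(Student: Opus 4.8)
The plan is to argue by contradiction: suppose $(u,v)$ is a nontrivial nonnegative entire solution of \eqref{eq:main:task}. The first step is to pass to spherically averaged quantities. Setting $\bar u(r)$ and $\bar v(r)$ to be the spherical means of $u$ and $v$ over $|x|=r$, one integrates the equations over balls and uses the divergence theorem to get $(\bar u)'(r) = r^{1-N}\int_{B_r} p v^\alpha\,dx \geq 0$ and similarly for $\bar v$, so both means are nondecreasing. Using Jensen's inequality (here is where $\alpha \geq 1$, $\beta \geq 1$ enter) together with the definition of $\tilde p$ and $\tilde q$ — for $\alpha>1$ one applies Jensen to the convex function $t\mapsto t^{1/(1-\alpha)}$ appropriately, for $\alpha=1$ one simply uses $p(x)\geq\min_{|x|=r}p$ — one derives the differential inequalities
\begin{equation*}
  (\bar u)'(r) \geq r^{1-N}\int_\rho^r s^{N-1}\tilde p(s)\bar v^\alpha(s)\,ds, \qquad
  (\bar v)'(r) \geq r^{1-N}\int_\rho^r s^{N-1}\tilde q(s)\bar u^\beta(s)\,ds,
\end{equation*}
valid for $r$ large. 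Integrating once more in $r$ and interchanging the order of integration produces exactly the kernel $\frac{1}{N-2}s\tilde p(s)[1-(s/r)^{N-2}]$ appearing in \eqref{eq:main3:defuk}, so that $\bar u(r) \geq c\,u_k(r)$ and $\bar v(r)\geq c\,v_k(r)$ for every $k$, by induction on $k$ starting from the trivial lower bound $\bar u, \bar v \geq$ const $> 0$ (nontriviality plus monotonicity forces a positive lower bound past some radius; after rescaling we may normalize the constant, matching $u_0=v_0=1$).

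The second step is to feed these lower bounds into an integral inequality of the type handled by Lemma~\ref{l:I1}. Fix $k$ and consider an annulus $[R, bR]$ with $1<a<b\leq 2$. From the differential inequalities above, iterated $m$ (resp.\ $n$) times with the kernel $(r-s)^m$, one obtains on $[R,bR]$ bounds of the form $h(r) \geq \int_R^r (r-s)^m s^{1-m}\tilde p(s) g^\alpha(s)\,ds$ and $g(r)\geq \int_R^r (r-s)^n s^{1-n}\tilde q(s)h^\beta(s)\,ds$ where $h$ is (a constant multiple of) $\bar u$ shifted so as to vanish with its derivatives at $R$, and $g$ likewise from $\bar v$; the monotonicity hypotheses on $r^{1-m}\tilde p(r)$ and $r^{1-n}\tilde q(r)$ are precisely what let us take $P(s)=s^{1-m}\tilde p(s)$, $Q(s)=s^{1-n}\tilde q(s)$ as the nonincreasing functions required by Lemma~\ref{l:I1}. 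Applying that lemma with $A = aR$ gives
\begin{equation*}
  y^{\frac{\alpha\beta-1}{(n+1)\alpha+m+1}}(aR)\int_{aR}^{bR}\left(s^{1-m}\tilde p(s)(s^{1-n}\tilde q(s))^\alpha\right)^{\frac{1}{(n+1)\alpha+m+1}}\,ds \leq C_y
\end{equation*}
and the analogous bound with $z$, where the constants $C_y, C_z$ are independent of $R$. The final step is to identify $y(aR)$ and $z(aR)$ from the lemma — built from $h\geq c\,\bar u \geq c'\,u_k$ and $g \geq c\, \bar v\geq c'\, v_k$ — with a constant multiple of $y_k(R)$ and $z_k(R)$ respectively (the monotonicity of the means lets one bound the integral defining $y$ from below by the integral over $[R, aR]$ with $v_k$ in place of $g$), so the bounded quantities above dominate, up to constants,
\begin{equation*}
  y_k^{\frac{\alpha\beta-1}{(n+1)\alpha+m+1}}(R)\int_{aR}^{bR}\left(s^{1-m}\tilde p(s)(s^{1-n}\tilde q(s))^\alpha\right)^{\frac{1}{(n+1)\alpha+m+1}}\,ds.
\end{equation*}
Taking $\limsup_{R\to\infty}$ contradicts hypothesis \eqref{eq:main3:oscy}; the case of \eqref{eq:main3:oscz} is symmetric, using \eqref{ineq:help:ezsupl} instead.

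The main obstacle I expect is the bookkeeping in step one and two: correctly iterating the second-order radial operator to land on the exact kernels $[1-(s/r)^{N-2}]$ and $(r-s)^m$, keeping track of which derivatives vanish at the lower endpoint $R$ so that Lemma~\ref{l:I1}'s hypotheses \eqref{24}--\eqref{25} are genuinely met, and verifying that all emerging constants are independent of $R$ (so that the $\limsup$ argument is legitimate). A secondary subtlety is justifying the reduction to radial functions when $p, q$ are not spherically symmetric — in particular handling the case $\tilde p(r)=0$ arising from a divergent integral, where the corresponding term simply drops out and the inequality is vacuous on that sphere — and ensuring the Jensen step is applied in the correct direction when $\alpha, \beta > 1$. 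Once the chain $\bar u \geq c\, u_k$, $\bar v\geq c\, v_k$ and the identification of $y(aR), z(aR)$ with $y_k(R), z_k(R)$ up to constants are in place, the contradiction with \eqref{eq:main3:oscy} or \eqref{eq:main3:oscz} is immediate.
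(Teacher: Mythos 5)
Your proposal is correct and follows essentially the same route as the paper: spherical averaging with Jensen's inequality to get the radial integral inequalities with kernel $\frac{1}{N-2}s\tilde p(s)[1-(s/r)^{N-2}]$, induction to obtain $\overline{u}\geq C u_k$, $\overline{v}\geq C v_k$, the elementary bound $1-(s/r)^{N-2}\geq \frac{N-2}{b^{N-2}}\left(\frac{r-s}{s}\right)^m$ on the annulus to reach the hypotheses of Lemma~\ref{l:I1} with $P(s)=s^{1-m}\tilde p(s)$, $Q(s)=s^{1-n}\tilde q(s)$, $A=aR$, and finally the identification $y(aR)\geq C^\alpha_{2k+1}y_k(R)$, $z(aR)\geq C^\beta_{2k}z_k(R)$ yielding the contradiction. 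The only cosmetic difference is that no ``shifting'' of $\overline{u}$, $\overline{v}$ is needed: the lemma's vanishing conditions apply to $y$ and $z$, which are integrals from $R$ and hence vanish there automatically.
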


\begin{proof}
Assume to the contrary that (\ref{eq:main:task}) has a nontrivial nonnegative entire solution $(u,v).$
Let $\overline{u} (r),$ $\overline{v} (r)$ denote the averages of $u (x),$ $v (x)$
over the sphere $|x| = r,$ respectively, that is,
\begin{equation*}  \label{aver}
 \overline{u} (r) = \frac{1}{\omega_N r^{N-1}} \int_{|x| =r} u (x) \, dS, \quad
  \overline{v} (r) = \frac{1}{\omega_N r^{N-1}} \int_{|x| =r} v (x) \, dS.
\end{equation*}
Then we can proceed analogously as in \cite{NS, Ni}  to find that
\begin{equation}  \label{4}
(r^{N-1} \overline{u}'(r))' \geq  r^{N-1} \tilde p (r) \overline{v}^\alpha (r), \, r>0, \quad \overline{u}'(0) = 0,
\end{equation}
\begin{equation}  \label{5}
(r^{N-1} \overline{v}'(r))' \geq  r^{N-1} \tilde q (r) \overline{u}^\beta (r), \, r>0, \quad \overline{v}'(0) = 0.
\end{equation}
Obviously, $\overline{u}'(r) \geq 0,$  $\overline{v}'(r) \geq 0, $ $r \geq 0$ and
$\overline{u}(r) > 0,$  $\overline{v}(r) > 0,$ $r > \rho$ for some $\rho > 0.$
Integrating (\ref{4}) and (\ref{5}) twice over $[0,r],$ we have for $r > \rho$
\begin{align}
 \overline{u}(r) &\geq  \overline{u}(0) + \frac{1}{N-2} \int_0^r s \tilde p(s) \left[1 -
    \left(\frac{s}{r}\right)^{N-2} \right] \overline{v}^\alpha(s) \, ds \nonumber \\
      &\geq \frac{1}{N-2} \int_\rho^r s \tilde p(s) \left[1 -
    \left(\frac{s}{r}\right)^{N-2} \right] \overline{v}^\alpha(s) \, ds  \label{eq:main3:radu}
\end{align}
and
\begin{align}
 \overline{v}(r) &\geq  \overline{v}(0) + \frac{1}{N-2} \int_0^r s \tilde q(s) \left[1 -
    \left(\frac{s}{r}\right)^{N-2} \right] \overline{u}^\beta(s) \, ds  \nonumber \\
       &\geq \frac{1}{N-2} \int_\rho^r s \tilde q(s) \left[1 -
    \left(\frac{s}{r}\right)^{N-2} \right] \overline{u}^\beta(s) \, ds.  \label{eq:main3:radv}
\end{align}
Now the principle of mathematical induction is used to prove the estimates
\begin{equation}  \label{6}
   \overline{u}(r) \geq C_{2k} u_k (r), \;   \overline{v}(r) \geq C_{2k+1} v_k(r),  \; r > \rho,
\end{equation}
where $u_k(r)$ and $v_k(r)$ are defined in (\ref{eq:main3:defu0v0}) --
(\ref{eq:main3:defvk}), $ k =0,1,2,\dots.$ It is easy to see that (\ref{6}) follows from
(\ref{eq:main3:defu0v0}) for $k=0.$  Assume (\ref{6}) is true for $k=l-1.$ In view of
(\ref{eq:main3:defuk}), (\ref{eq:main3:defvk}),  (\ref{eq:main3:radu}),  (\ref{eq:main3:radv}) and (\ref{6}) with $k=l-1,$ we have
 \begin{equation*}
   \overline{u}(r) \geq \frac{C^\alpha_{2l-1}}{N-2} \int_\rho^r s \tilde p(s) \left[1 -
    \left(\frac{s}{r}\right)^{N-2} \right] v_{l-1}^\alpha(s) \, ds,
\end{equation*}
 \begin{equation*}
\overline{v}(r) \geq \frac{ C^\beta_{2l-2} }{N-2} \int_\rho^r s \tilde q(s) \left[1 -
    \left(\frac{s}{r}\right)^{N-2} \right] u_{l-1}^\beta(s) \, ds,
\end{equation*}
that is equivalent to (\ref{6}) with $k=l,\,$
$C_{2l} = C^\alpha_{2l-1},\,$ $C_{2l+1} = C^\beta_{2l-2}.$

Let $R \leq s \leq r \leq bR.$ Then $s/r \geq 1/b$ and $(r - s)/s \leq b - 1 \leq 1.$
Using mean value theorem, we obtain
\begin{equation*}
  1 - \left(\frac{s}{r}\right)^{N-2} = \frac{r^{N-2}-s^{N-2}}{r^{N-2}}
  = \frac{(N-2)\xi^{N-3} (r-s)}{r^{N-2}} \geq
  \frac{N-2}{b^{N-2}} \frac{r - s}{s},
\end{equation*}
where $ s \leq \xi \leq r .$ Since $ (r-s)/s \leq 1,$ we conclude that
\begin{gather}
  1 - \left(\frac{s}{r}\right)^{N-2} \geq \frac{N-2}{b^{N-2}}
  \left( \frac{r - s}{s} \right)^m, \quad 1 -
  \left(\frac{s}{r}\right)^{N-2} \geq \frac{N-2}{b^{N-2}} \left(
    \frac{r - s}{s} \right)^n. \label{ineq:main3:krnl}
\end{gather}
Now (\ref{eq:main3:radu}), (\ref{eq:main3:radv}), (\ref{ineq:main3:krnl}) imply the inequalities
\begin{align*}
\overline{u}(r) &\geq c_3 \int_R^r (r-s)^m s^{1-m} \tilde p(s) \overline{v}^\alpha(s) \, ds,
  \\
\overline{v}(r) &\geq c_3 \int_R^r (r-s)^n s^{1-n} \tilde q(s)  \overline{u}^\beta(s) \, ds,
\end{align*}
where $c_3=1/{b^{N-2}},\,$  $R \geq \rho.$ Applying Lemma~\ref{l:I1} with $h = \overline{u},$ $g = \overline{v},$
$P(s) = s^{1-m} \tilde p(s),$ $Q(s) = s^{1-n} \tilde q(s),$ $A = aR,$ we have
\begin{gather}
  y^{\frac{\alpha\beta -1}{(n+1)\alpha+m+1}}(aR)
  \int_{aR}^{bR}\left(s^{1-m}  \tilde p(s) (s^{1-n} \tilde q(s))^\alpha
  \right)^{\frac{1}{(n+1)\alpha+m+1}} \, ds \leq  C_y,
  \label{ineq:main3:limy}
  \\
  z^{\frac{\alpha\beta-1}{(m+1)\beta+n+1}}(aR)
  \int_{aR}^{bR}\left( s^{1-n} \tilde q(s) (s^{1-m}  \tilde p(s))^\beta
  \right)^{\frac{1}{(m+1)\beta +n +1}} \, ds \leq  C_z.
  \label{ineq:main3:limz}
\end{gather}
Combining (\ref{ineq:main3:limy}), (\ref{ineq:main3:limz})  with (\ref{6}), one obtains
\begin{gather*}
\left( C^\alpha_{2k+1}  y_k(R) \right)^{\frac{\alpha\beta -1}{(n+1)\alpha+m+1}}
  \int_{aR}^{bR}\left(s^{1-m}  \tilde p(s) (s^{1-n} \tilde q(s))^\alpha
  \right)^{\frac{1}{(n+1)\alpha+m+1}} \, ds \leq  C_y,     \\
\left( C^\beta_{2k}  z_k(R) \right)^{\frac{\alpha\beta-1}{(m+1)\beta+n+1}}
  \int_{aR}^{bR}\left( s^{1-n} \tilde q(s) (s^{1-m}  \tilde p(s))^\beta
  \right)^{\frac{1}{(m+1)\beta +n +1}} \, ds \leq  C_z,
 \end{gather*}
that contradicts (\ref{eq:main3:oscy}), (\ref{eq:main3:oscz}).

\end{proof}

\begin{remark} \label{Rem1}
Let the conditions of Theorem~\ref{Th2} hold for the problem (\ref{eq:main:task}) with
$p(x) = p_0 (x)$ and $q(x) = q_0 (x).$ Then from the proof of Theorem~\ref{Th2} we conclude that
there are not nontrivial nonnegative entire solutions of (\ref{eq:main:task})
with any $p(x)$ and $q(x)$ satisfying the inequalities
\begin{equation*}
\tilde p (r) \geq \tilde p_0 (r), \,\,\,  \tilde q (r) \geq \tilde q_0 (r).
\end{equation*}
\end{remark}


\section{The proof of Theorem~\ref{Th1}}\label{proof}

\begin{proof}
Theorem~\ref{Th1} is proved in \cite{Y,T1} under the conditions (i) and (ii).
Let us consider the problem (\ref{eq:main:task}) with $p(x)$ and $q(x)$ such that
\begin{equation}\label{23}
\tilde p(r) = \frac{L_1}{r^\lambda \ln^\nu r} ,\quad
\tilde q(r) = \frac{L_2}{r^\mu \ln^\xi r}, \quad r \geq r_0 > 1,
\end{equation}
Let (iii) hold, that is,
\begin{equation}  \label{7}
2 - \mu + \beta (2 - \lambda) = 0, \, \lambda < 2, \, 1 - \xi - \beta \nu > 0.
\end{equation}
Using the principle of mathematical induction, we prove the estimates
\begin{gather}\label{8}
  v_{2k}(r) \geq \bar C_{2k} (\ln r)^{\frac{(\alpha^{k} \beta^{k} - 1)(1
      - \beta \nu - \xi)}{\alpha \beta - 1}},\\
  \label{9}
  u_{2k+1}(r) \geq \bar C_{2k+1} r^{2-\lambda}
  (\ln r)^{\frac{(\alpha^{k} \beta^{k} - 1)(1 - \beta \nu - \xi) \alpha}{\alpha \beta - 1} - \nu},
\end{gather}
where $u_k(r)$ and $v_k(r)$ are defined in (\ref{eq:main3:defu0v0}) --
(\ref{eq:main3:defvk}), $ k =0,1,2,\dots,$  $r > r_k$ for some $r_k >r_\ast = \max (\rho, r_0).$
First, we prove (\ref{8}), (\ref{9}) for $k=0.$ Then (\ref{8}) follows from (\ref{eq:main3:defu0v0}).
Easy to see that
\begin{equation}  \label{10}
1 - \left(\frac{s}{r}\right)^{N-2} \geq  \frac{r - s}{r}   \,\,\, \textrm{for} \,\,\,  s \in (0,r).
\end{equation}
 From  (\ref{eq:main3:defu0v0}), (\ref{eq:main3:defuk}), (\ref{23}),  (\ref{10}), we conclude
\begin{equation*}
  u_1(r) \geq \frac {L_1} r \int_{r_\ast}^r  (r-s) s^{1-\lambda} \ln^{-\nu} s
   \, ds.
\end{equation*}
Using \eqref{7} and the integration by parts, we obtain
\begin{equation*}
  u_1(r) \geq \bar C_1
  r^{2-\lambda} \ln^{-\nu} r,  \;  r > r_1
\end{equation*}
for a suitable choice $\bar C_1$ and $r_1.$
Assume (\ref{8}) and (\ref{9}) are true for $k=l-1.$ In view of
 (\ref{eq:main3:defvk}),  (\ref{23}), (\ref{7}), (\ref{10}) and (\ref{9}) with $k=l-1,$ we have
 \begin{equation*}
 \begin{split}
  v_{2l} & \geq  L_2 \bar C_{2l - 1}^\beta \int_{r_\ast}^r
  s^{1 - \mu + (2 - \lambda) \beta} (\ln s)^{-\xi - \beta \nu +
    \frac{(\alpha^{l - 1} \beta^{l - 1} - 1)  (1 - \xi - \beta \nu ) \alpha \beta}{\alpha \beta - 1}} \left(1 -
    \left(\frac{s}{r}\right)^{N-2}\right) \, ds
    \\
   & \geq
  \frac{L_2 \bar C_{2l - 1}^\beta}{r} \int_{r_\ast}^r \frac{r-s}{s} (\ln s)^{-\xi - \beta \nu +
    \frac{(\alpha^{l - 1} \beta^{l - 1} - 1)  (1 - \xi - \beta \nu ) \alpha \beta}{\alpha \beta - 1}} \, ds.
  \end{split}
\end{equation*}
Integrating by parts on the right side of last inequality, we deduce
(\ref{8}) with $k=l.$
It follows from (\ref{eq:main3:defuk}), (\ref{23}), (\ref{7}), (\ref{10}) and (\ref{8}) with $k=l-1,$ that
 \begin{equation*}
   u_{2 l + 1}(r) \geq \frac{L_1 \bar C_{2 l}^\alpha}r \int_{r_\ast}^r
  ( r - s ) s^{1 - \lambda} ( \ln s )^{-\nu + \frac{( \alpha^{l} \beta^{l}
      - 1 )( 1 - \xi - \beta \nu ) \alpha}{\alpha \beta - 1}} \, ds.
\end{equation*}
Applying the integration by parts, we prove (\ref{9}) with $k=l.$

Now we check (\ref{eq:main3:oscz}). To do it we estimate the multipliers on left side of (\ref{eq:main3:oscz}).
For the convenience, we denote
  \begin{equation*}
 \sigma_k = -\xi - \beta \nu + \frac{(\alpha^k \beta^k - 1)  (1 - \xi - \beta \nu ) \alpha \beta}{\alpha \beta - 1}.
\end{equation*}
It is easy to see that $ \sigma_k > 0$ for large values of $k.$
Applying (\ref{yz}), (\ref{23}), (\ref{7}), (\ref{9}) and mean value theorem, we get
\begin{equation}\label{z}
\begin{split}
  z_{2k+1}(R) & = \int_R^{aR} (aR-s)^n s^{1-n} \tilde q(s) u_{2k+1}^\beta(s) \, ds  \\
  & \geq  L_2 \bar C_{2k + 1}^\beta  \int_R^{aR} (aR-s)^n s^{1-n -\mu + (2 - \lambda) \beta}  \ln^{\sigma_k} s \, ds \\
    & \geq L_2 \bar C_{2k + 1}^\beta \int_R^{(a+1)R/2} \left( \frac{aR}{s} - 1 \right)^n s^{-1}  \ln^{\sigma_k} s  \, ds \\
    & \geq \left( \frac{a-1}{a+1} \right)^n \frac{L_2 \bar C_{2k + 1}^\beta}{\sigma_k + 1}
     \left\{ \ln^{\sigma_k + 1} \left(  \frac{(a+1)R}{2} \right) -  \ln^{\sigma_k + 1} R \right\}  \\
      & \geq \left( \frac{a-1}{a+1} \right)^{n+1} L_2 \bar C_{2k + 1}^\beta \ln^{\sigma_k} R
   \end{split}
\end{equation}
for large values of $R$ and $k.$ Using (\ref{23}), (\ref{7})  and mean value theorem, we find
\begin{equation}\label{int}
   \begin{split}
  & \int_{aR}^{bR} \left( s^{1-n} \tilde q(s) (s^{1-m} \tilde p (s))^\beta   \right)^{\frac{1}{(m+1)\beta+n+1}} \, ds \\
  & = L_2 L_1^\beta  \int_{aR}^{bR} \left( s^{1-n -\mu +(1-m) \beta - \lambda \beta} \ln^{-\xi - \beta \nu} s \right)^{\frac{1}{(m+1)\beta+n+1}} \, ds \\
  & =  L_2 L_1^\beta  \int_{aR}^{bR} s^{-1} (\ln s)^{-\frac{\xi + \beta \nu}{(m+1)\beta+n+1}}  \, ds \\
  & =  L_2 L_1^\beta \frac{(m+1)\beta+n+1}{(m+1)\beta+n+1 -\xi - \beta \nu}  \left[ (\ln (bR))^{\frac{(m+1)\beta+n+1 -\xi - \beta \nu}{(m+1)\beta+n+1}} -
  (\ln (aR))^{\frac{(m+1)\beta+n+1 -\xi - \beta \nu}{(m+1)\beta+n+1}} \right] \\
  & =  L_2  L_1^\beta \frac{b-a}{\gamma}  (\ln (\gamma R))^{-\frac{\xi +\beta \nu}{(m+1)\beta+n+1}},
 \end{split}
\end{equation}
where $R > r_0,$  $\gamma \in (a,b).$ Now from (\ref{z}), (\ref{int}), we conclude
\begin{equation*}
  \limsup_{R \to \infty}
  z_{2k+1}^{\frac{\alpha\beta-1}{(m+1)\beta+n+1}}(R)
  \int_{aR}^{bR} \left( s^{1-n} \tilde q(s) (s^{1-m} \tilde p (s))^\beta
  \right)^{\frac{1}{(m+1)\beta+n+1}} \, ds
\end{equation*}
\begin{equation*}
\geq  \hat C_{k} \lim_{R\to\infty}
  (\ln R)^{\frac{\alpha\beta}{(m+1)\beta+n+1}\left(\alpha^k\beta^k (1  - \xi - \beta\nu) - 1 \right)} = \infty
\end{equation*}
for large values of $k.$ Since $\lambda < 2$ and $\mu > 2$ the functions $r^{1-n} \tilde q (r)$ and $r^{1-m} \tilde p (r)$
are nonincreasing for any $n \in \mathbf{N},$  $m > 1 - \lambda$ and large $r.$
Thus, by Theorem~\ref{Th2} and Remark~\ref{Rem1} there are not nontrivial
nonnegative entire solutions of (\ref{eq:main:task}).

The case (iv) is treated in a similar way.

We note that (v) follows from (vi) if $\nu < 1$ and $\xi \geq  1,$
(vi) follows from (v) if $\nu \geq 1$ and $\xi < 1,$ and
(v) and (vi) are equivalent if $\nu < 1$ and $\xi < 1.$
So, we can prove the theorem for (v) under the condition $\nu < 1$ and for (vi) under the condition  $\xi  < 1.$

Let (v) and $\nu < 1$ hold, that is,
\begin{equation}\label{20}
\lambda = 2,\, \mu = 2, \, 1 - \xi + \beta (1-\nu) > 0, \, \nu < 1.
\end{equation}
As in a previous case, using the principle of mathematical induction, we prove the estimates
\begin{equation}  \label{ineq:cons3:case3-v2k}
  v_{2k}(r) \geq \breve C_{2 k}
  (\ln r)^{\frac{(\alpha^k\beta^k-1)(1-\xi+\beta(1-\nu))}{\alpha\beta-1}},
\end{equation}
\begin{equation*}
    u_{2k+1}(r)\geq \breve C_{2 k + 1}
  \left( \ln r \right)^{\frac{(\alpha^k\beta^k-1)(1-\xi+\beta(1-\nu))\alpha}
    {\alpha\beta-1}+1-\nu},
\end{equation*}
where $ k =0,1,2,\dots,$  $r > \bar r_k$ for some $\bar r_k > r_\ast.$ By virtue of
(\ref{yz}), (\ref{23}), (\ref{20}), (\ref{ineq:cons3:case3-v2k}), we have
\begin{equation*}
   \limsup_{R \to \infty} y_k^{\frac{\alpha\beta -
      1}{(n+1)\alpha+m+1}}(R)
  \int_{aR}^{bR} \left( s^{1-m} \tilde p (s) (s^{1-n} \tilde q(s))^\alpha
  \right)^{\frac{1}{(n+1)\alpha+m+1}} \, ds
\end{equation*}
\begin{equation*}
\geq  \tilde C_k \lim_{R\to\infty} (\ln R)^{\frac{\alpha^{k + 1}
      \beta^k (1 - \xi + \beta (1 - \nu)) - \alpha - \alpha
      \beta}{(n+1)\alpha+m+1}} = \infty
\end{equation*}
for large values of $k.$ Obviously,  $r^{1-m} \tilde p (r)$ and $r^{1-n} \tilde q (r)$
are nonincreasing  functions for any $m,n \in \mathbf{N}$ and large $r.$
Applying Theorem~\ref{Th2} and Remark~\ref{Rem1} again, we prove that there are not nontrivial
nonnegative entire solutions of (\ref{eq:main:task}).

If (vi) and  $\xi < 1$ hold then the theorem is proved in a very similar manner.
\end{proof}

\begin{remark}
Let $ p $ and $ q $ have spherical symmetry and $(u, v)$ be a nonnegative spherically symmetric entire solution of (\ref{eq:main:task}).
If $\alpha > 0,$ $\beta > 0$ then $(u, v)$ satisfies the following problem
\begin{equation*}
(r^{N-1} u'(r))' =  r^{N-1}  p (r) v^\alpha (r), \, r>0, \quad u'(0) = 0,
\end{equation*}
\begin{equation*}  \
(r^{N-1} v'(r))' =  r^{N-1}  q (r) u^\beta (r), \, r>0, \quad v'(0) = 0.
\end{equation*}
We note that the conditions $\alpha \geq 1,$ $\beta \geq 1$ of Theorem~\ref{Th1} and Theorem~\ref{Th2} are used for (\ref{4}) and (\ref{5}) only.
Hence we can state in Theorem~\ref{Th1} and Theorem~\ref{Th2} the nonexistence of nontrivial nonnegative spherically symmetric entire solutions of (\ref{eq:main:task}) without the assumptions $\alpha \geq 1,$ $\beta \geq 1.$
\end{remark}


\section{The optimality of Theorem~\ref{Th1}}
In this section, we show the optimality of Theorem~\ref{Th1}. We assume that the system (\ref{eq:main:task}) has spherically symmetric coefficients
$ p (x) =  p (|x|), \, $  $ q (x) =  q (|x|)$ which satisfy the inequalities
\begin{equation}\label{15}
p(r) \leq \frac{L_3}{r^\lambda \ln^\nu r} ,\quad
q(r) \leq \frac{L_4}{r^\mu \ln^\xi r}, \quad r \geq r_1 > 1,
\end{equation}
where $L_3 > 0,\, $ $L_4 > 0.$

The following statement is proved by a direct verification of the conditions (\ref{Lair-2011:exist-Q}) and  (\ref{Lair-2011:exist-P}).
\begin{theorem}\label{Th3}
Let $N \geq 3,$ $\alpha \beta > 1$ and $p$ and $q$ satisfy (\ref{15}). If one from the following conditions holds
       \begin{equation*}
 \begin{split}
 & \textrm{(i)} \,\,\,\,\,\,\,    2 - \mu+  \beta (2 - \lambda)  < 0, \;  2 - \lambda + \alpha (2 - \mu) < 0; \\
 & \textrm{(ii)} \,\,\,\,\,       2 - \mu+  \beta (2 - \lambda) = 0,  \;  \lambda < 2,  \;  1 - \xi - \beta \nu < 0;  \\
 & \textrm{(iii)} \,\,\,          2 - \lambda + \alpha (2 - \mu) = 0,  \;  \mu < 2,  \;  1 - \nu - \alpha \xi  <0;   \\
 & \textrm{(iv)} \,\,\,\,         \lambda = 2, \;  \mu = 2,  \;  1 - \xi + \beta (1-\nu) < 0,  \;  1-\nu+\alpha(1-\xi) <0,
 \end{split}
\end{equation*}
then (\ref{eq:main:task}) has positive entire solutions.
\end{theorem}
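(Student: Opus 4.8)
The plan is to verify, for each of the four sets of hypotheses, that at least one of Lair's integral conditions (\ref{Lair-2011:exist-Q}), (\ref{Lair-2011:exist-P}) holds; the assertion then follows immediately from the existence result of \cite{L} recalled in the introduction. It is convenient to put
\begin{equation*}
 A(t)=t^{2-N}\int_0^t s^{N-3}\int_0^s \tau\,p(\tau)\,d\tau\,ds,\qquad
 B(t)=t^{2-N}\int_0^t s^{N-3}\int_0^s \tau\,q(\tau)\,d\tau\,ds,
\end{equation*}
so that (\ref{Lair-2011:exist-Q}) reads $\int_0^\infty t\,p(t)B^\alpha(t)\,dt<\infty$ and (\ref{Lair-2011:exist-P}) reads $\int_0^\infty t\,q(t)A^\beta(t)\,dt<\infty$. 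Since $p$ and $q$ are continuous, $A(t)$ and $B(t)$ are $O(t^2)$ as $t\to0^+$, hence both integrands are continuous and bounded near the origin, and only the behaviour at infinity has to be controlled.

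First I would derive asymptotic upper bounds for $A$ and $B$ from (\ref{15}). Writing $\int_0^t=\int_0^{r_1}+\int_{r_1}^t$ and inserting $q(\tau)\le L_4\tau^{-\mu}\ln^{-\xi}\tau$, two successive integrations (together with one integration by parts, or l'Hospital's rule for the leading term) yield, for large $t$: $B(t)\le C\,t^{2-\mu}\ln^{-\xi}t$ when $\mu<2$; $B(t)\le C$ when $\mu>2$ (here one uses $\int_0^\infty\tau q(\tau)\,d\tau<\infty$); and, when $\mu=2$, $B(t)\le C\ln^{1-\xi}t$ for $\xi<1$, $B(t)\le C\ln\ln t$ for $\xi=1$, $B(t)\le C$ for $\xi>1$. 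The analogous estimates hold for $A$ with $(\mu,\xi)$ replaced by $(\lambda,\nu)$.

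It then remains to feed these bounds into the appropriate one of the two integrals; in every case the integrand is dominated, for large $t$, by a function of the form $t^{a}(\ln t)^{b}$, or $t^{-1}(\ln t)^{-\nu}(\ln\ln t)^{d}$ when a log--log factor survives, whose convergence at infinity is decided by the elementary criteria (convergence iff $a<-1$, or $a=-1$ with the exponent of $\ln t$ larger than $1$, and so on). For (i), the two strict inequalities force $\lambda>2$ or $\mu>2$; one uses (\ref{Lair-2011:exist-Q}) in the first case and (\ref{Lair-2011:exist-P}) in the second, the exponent of $t$ in the integrand working out to $2-\lambda+\alpha(2-\mu)-1<-1$ (resp.\ $2-\mu+\beta(2-\lambda)-1<-1$) when the companion exponent is $<2$, and to something $<-1$ automatically otherwise. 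For (ii), $\lambda<2$ forces $\mu>2$, so $A(t)\le C\,t^{2-\lambda}\ln^{-\nu}t$ and the integrand of (\ref{Lair-2011:exist-P}) is $\le C\,t^{-1}(\ln t)^{-\xi-\beta\nu}$, integrable precisely because $1-\xi-\beta\nu<0$; case (iii) is symmetric, via (\ref{Lair-2011:exist-Q}).

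For (iv), where $\lambda=\mu=2$, the two strict inequalities rule out $\nu=\xi=1$ and forbid $\nu<1$ and $\xi<1$ holding together. If $\nu<1$ then necessarily $\xi>1$; inserting $A(t)\le C\ln^{1-\nu}t$ into (\ref{Lair-2011:exist-P}) gives an integrand $\le C\,t^{-1}(\ln t)^{\beta(1-\nu)-\xi}$, integrable since $1-\xi+\beta(1-\nu)<0$; the case $\xi<1$ is symmetric, via (\ref{Lair-2011:exist-Q}). If $\nu\ge1$ and $\xi\ge1$, at least one of them exceeds $1$, and using the corresponding integral together with $B(t)\le C\ln\ln t$ (resp.\ $A(t)\le C\ln\ln t$) yields an integrand $\le C\,t^{-1}(\ln t)^{-\nu}(\ln\ln t)^{\alpha}$ (resp.\ with $\xi$ and $\beta$), which converges because the exponent of $\ln t$ exceeds $1$. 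I expect this last case to be the main obstacle: one must retain the log--log corrections and choose the correct one of the two integrals so that the surviving exponent of $\ln t$ matches the hypothesis; the remaining estimates are routine.
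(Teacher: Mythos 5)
Your proposal is correct and follows exactly the route the paper indicates: the paper's entire proof of Theorem~\ref{Th3} is the remark that it follows "by a direct verification of the conditions (\ref{Lair-2011:exist-Q}) and (\ref{Lair-2011:exist-P})," and your asymptotic bounds for $A(t)$, $B(t)$ and the resulting case analysis (including the choice of which of Lair's two integrals to use in each case and the log--log corrections when $\lambda=\mu=2$) carry that verification out correctly.
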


Let the conditions of Theorem~\ref{Th1} and Theorem~\ref{Th3} hold with $L_3 \geq L_1, $ $L_4 \geq L_2.$
The figures 1--4 show values of the parameters $\lambda,$ $\mu,$ $\nu$ and $\xi$ in (\ref{3}) and (\ref{15})
providing the nonexistence and existence of nontrivial nonnegative entire solutions of (\ref{eq:main:task}).

\begin{figure}[h!]
\begin{minipage}[h]{0.47\linewidth}
\center{\includegraphics{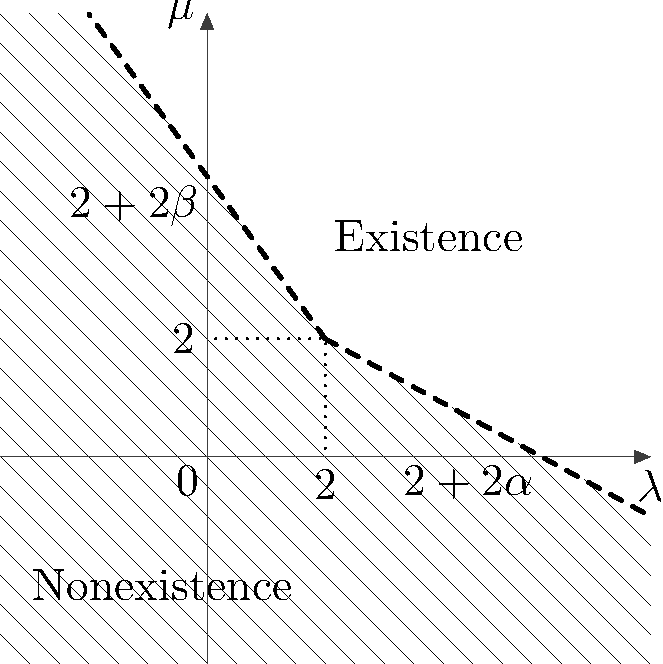}}\hspace{25pt}
\caption{$\nu$ and $\xi$ are any.}
\end{minipage}
\hfill
\begin{minipage}[h]{0.47\linewidth}
\center{\includegraphics{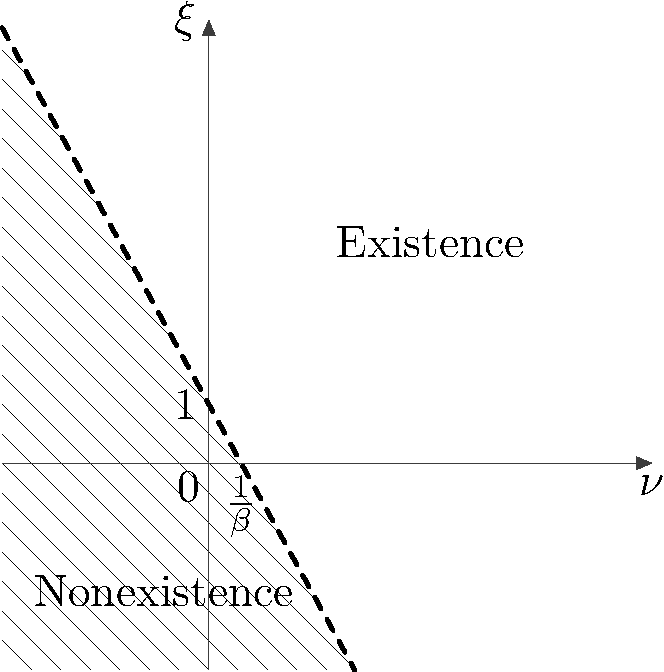}}
\caption{$2 - \mu+  \beta (2 - \lambda) = 0$ and $\lambda < 2.$}
\end{minipage}
\end{figure}
\newpage
\begin{figure}[h!]
\begin{minipage}[h]{0.47\linewidth}
\center{\includegraphics{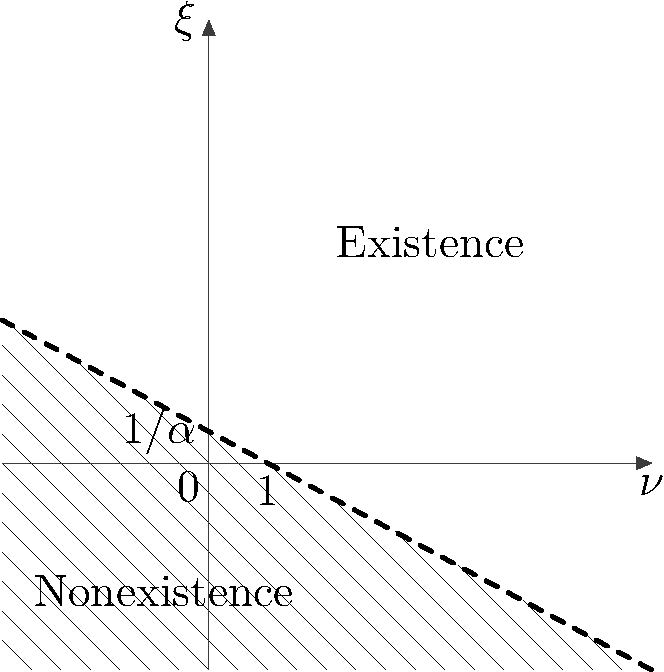}}\hspace{25pt}
\caption{$ 2 - \lambda + \alpha (2 - \mu) = 0$ and $\mu < 2.$ }
\end{minipage}
\hfill
\begin{minipage}[h]{0.47\linewidth}
\center{\includegraphics{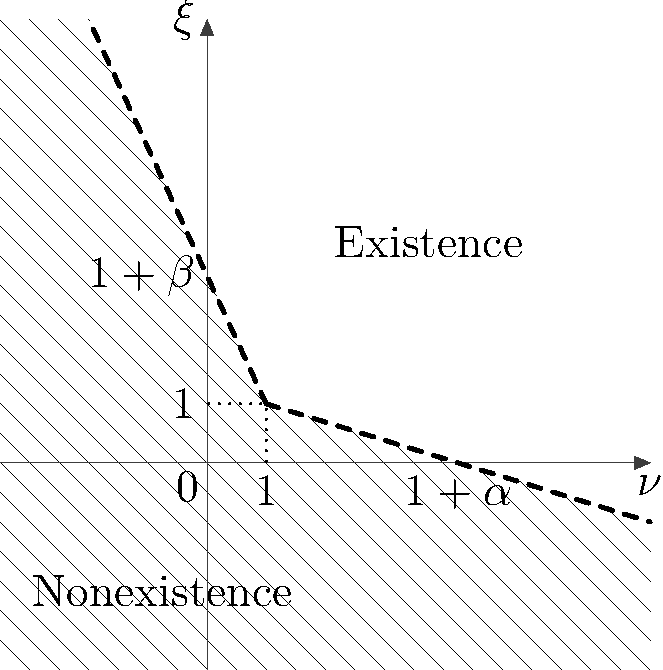}}
\caption{$\lambda = 2$ and $\mu = 2.$}
\end{minipage}
\end{figure}

\section*{Disclosure statement}

No potential conflict of interest was reported by the authors.

\section*{Funding}

A. Gladkov is supported by the "RUDN University Program 5-100" and the state program of fundamental research of Belarus
(grant 1.2.03.1).

\end{document}